\theoremstyle{plain}
\newtheorem{thm}{Theorem}[section]
\newtheorem{example}[thm]{Example}
\newtheorem{lemma}[thm]{Lemma}
\newtheorem{coro}[thm]{Corollary}
\newtheorem{defn}[thm]{Definition}
\newtheorem{rem}[thm]{Remark}
\newtheorem{rems}[thm]{Remarks}
\newcommand{\R}{{\mathbb R}}
\newcommand{\N}{{\mathbb N}}
\newcommand{\NxN}{{N\times N}}
\newcommand{\diag}{\operatorname{diag}}
\newcommand{\bmat}[1]{\begin{bmatrix}#1\end{bmatrix}}
\newcommand{\abs}[1]{\vert #1 \vert}
\newcommand{\lam}{\lambda}
\newcommand{\cS}{\mathcal{S}}
\newcommand{\rank}{\operatorname{rank}}
\newcommand{\smat}[1]{ \left[\begin{smallmatrix} #1 \end{smallmatrix}\right]}
\def\svdots{\vbox{\baselineskip=1.5pt\lineskiplimit=0pt
		\kern1.5pt \hbox{$\ss .$}\hbox{$\ss .$}\hbox{$\ss .$}}}
\def\sddots{\mathinner{\raise3pt\vbox{\hbox{$\ss .$}}
		\raise1.5pt\hbox{$\ss .$}\hbox{$\ss .$}}}
\renewcommand{\ss}{\scriptstyle}
\newcommand{\Rn}{\R^n}
\newcommand{\pder}[2]{\frac{\partial #1}{\partial #2}}
\renewcommand{\vec}[1]{{\boldsymbol{#1}}}
\def \x {{\vec{x}}}
\def \z {{\vec{z}}}
\def \y {{\vec{y}}}
\def \v {{\vec{v}}}
\def \w {{\vec{w}}}
\def \e {{\vec{e}}}
\def \b {{\vec{b}}}
\def \c {{\vec{c}}}
\def \f {{\vec{f}}}
\def \q {{\vec{q}}}
\def \u {{\vec{u}}}
\def \F {{\vec{F}}}
\title{On an inverse tridiagonal eigenvalue problem and \\ its application to synchronization of network motion}
\author{Luca Dieci}
\address{School of Mathematics \\ Georgia Tech \\
	Atlanta, GA 30332 U.S.A.}
\email{dieci@math.gatech.edu}
\author{Cinzia Elia}
\address{Dipartimento di Matematica, Univ. of Bari, I-70100, Bari,
	Italy}
\email{cinzia.elia@uniba.it}
\author{Alessandro Pugliese}
\address{Dipartimento di Matematica, Univ. of Bari, I-70100, Bari,
	Italy}
\email{alessandro.pugliese@uniba.it}
\thanks{
The authors gratefully acknowledge the support of the School of Mathematics of 
Georgia Tech.  The work has been partially supported by the 
GNCS-Indam group and the PRIN2022PNR P2022M7JZW 
`` SAFER MESH- Sustainable mAnagement oF watEr Resources modEls and numerical MetHods '' research grant. }
\subjclass{65F18, 15A18}
\keywords{Tridiagonal network, inverse eigenvalue problem, synchronization}
\begin{document}

%\null\hfill Version of \today $, \,\,\,$ \xxivtime
%
%
%
\begin{abstract}
In this work, motivated by the study of stability of the synchronous orbit of a network with tridiagonal Laplacian matrix, we first solve an inverse eigenvalue problem which builds a tridiagonal Laplacian matrix with eigenvalues $\lambda_1=0<\lambda_2<\cdots <\lambda_N$ and null-vector $\e=\smat{1\\ \svdots \\ 1}$.  Then, we show how this result can be used to guarantee --if possible-- that a synchronous orbit of a connected tridiagonal network associated to the matrix $L$ above is asymptotically stable, in the sense of having an associated negative Master Stability Function (MSF).  We further show that there are limitations when we also impose symmetry for $L$.
\end{abstract}
\maketitle
\pagestyle{myheadings}
\thispagestyle{plain}
\markboth{L. Dieci, C. Elia, A. Pugliese}{Inverse tridiagonal problem and network stabilization}

\medskip
\noindent {\bf Notation}. 
We let $\e_k$ be the $k$-th column of the identity matrix, and $\e$ be the vector of
all $1$'s.  Boldface will indicate vectors, whose number of elements will be clear from the context.  As needed, we will use {\tt Matlab} notation for entries of vectors and matrices, e.g. $A(i,j)$ will be the $(i,j)$-th entry of a matrix $A$.

\smallskip

\section{A linear algebra and a dynamical systems problem}
The purpose of this work is two-fold.  First, we will solve the following inverse eigenvalue problem: ``{\emph{Given values $\lambda_1=0<\lambda_2<\cdots <\lambda_N$, find an unreduced tridiagonal matrix $L$ with these as eigenvalues, with positive diagonal and negative off diagonal entries, and whose entries on each row add up to $0$ (that is, $L\e=0$)}}''.  Then, we will use this result to: ``{\emph{Show how and when we can guarantee that a synchronous orbit of a connected tridiagonal network associated to the matrix $L$ above is asymptotically stable, in the sense that the Master Stability Function (MSF) is negative}}''. 

In the process of solving the above problems, we will also realize that it is generally not possible to also require that $L$ is symmetric.  Further, we will show that if the synchronous orbit of the network associated to the standard tridiagonal matrix $T$ in \eqref{TridStandard}
%$=\smat{1 & -1 & 0 & 0 & \cdots & 0 \\
%-1 & 2 & -1 & 0 & \cdots & 0 \\	
%\vdots &  \ddots & \ddots & \ddots & \ddots & \vdots \\
%0 & \cdots & 0 & -1 & 2 & -1 \\
%0 & \cdots & \cdots &0  & -1 & 1}$ 
does not lead to a negative value of the MSF, then there cannot be any connected network with tridiagonal symmetric Laplacian matrix giving a negative MSF.  

From the theoretical point of view, our results are an important addition to the vast literature on inverse eigenvalue problems for tridiagonal matrices, by adding the nontrivial constraint of having a specific null vector.  From the application point of view, our results provide a rigorous and clear indication of how to weigh the arcs of a network with the nearest neighbor topology in order to obtain a stable synchronous orbit, thereby overcoming the limitations imposed by requiring symmetry.

A plan of the paper is as follows.  In the remaining part of this section, we review some relevant linear algebra and network (dynamical systems) results, in particular about the concept of MSF.  Then, in Section \ref{LinAlg}, we give our main theoretical results about the inverse eigenvalue problem, by giving and rigorously justifying
novel algorithms to obtain a tridiagonal matrix with given spectrum and null vector.  
In Section \ref{TridMSF}, we look at some theoretical results about symmetric connected tridiagonal network structure, and in Section \ref{NumRes} we show
performance of our techniques.   Finally, conclusions are in Section \ref{Concl}.

\subsection{Linear algebra background}

\begin{defn}\label{Reducible}
Given a matrix $A\in \R^{N\times N}$, we say that $A$ is {\emph{reducible}} if there exists a permutation $P$ and integers $N_1\ne 0 \ne N_2$, $N_1+N_2=n$, such that $P^TAP=\smat{A_{11} & 0 \\ 0 & A_{22}}$ where $A_{11}\in \R^{N_1\times N_1}$, $A_{22} \in \R^{N_2\times N_2}$.  If a matrix $A$ is not reducible, then it is called {\emph{irreducible}}.
\end{defn}

\begin{defn}\label{DefTrid}
A matrix $L\in \R^{N\times N}$ is tridiagonal and unreduced if it has the form
\begin{equation}\label{GivenTrid}
	L=\bmat{a_1 & b_2 & 0 & 0 & \cdots & 0 \\
		c_2 & a_2 & b_3 & 0 & \cdots & 0 \\	
		0 & c_3 & a_3  & b_4 & \ddots & 0 \\
		\vdots &  \ddots & \ddots & \ddots & \ddots & \vdots \\
		0 & \cdots & 0 & c_{N-1} & a_{N-1} & b_N \\
		0 & \cdots & \cdots &0  & c_N & a_N}\ ,
\end{equation}
with $b_j\ne 0$, $c_j\ne 0$, $j=2,\dots, N$.  Moreover, we call $L$ {\emph{network-tridiagonal}} if 
$b_j<0$, $c_j<0$, $j=2,\dots, N$, and $a_j=-b_{j+1}-c_j$, $j=1,\dots, N$ ($c_1=0=b_{N+1}$), in which case $L$ is singular with null vector $\e$.
\end{defn}

\begin{rem}
We note that a network tridiagonal matrix $L$ is effectively the Laplacian matrix of a given connected network.  Moreover, we also remark that an unreduced tridiagonal matrix $L$ is also automatically irreducible.
\end{rem}

The following well known result (e.g., see \cite{Muir, HJ-MatAna}) will be used below.
%and we provide a proof for completeness.

\begin{lemma}\label{SignsOff}
Given a general tridiagonal, unreduced, matrix $L$ as in \eqref{GivenTrid},
%=\smat{a_1 & b_2 & 0 & 0 & \cdots & 0 \\
%	c_2 & a_2 & b_3 & 0 & \cdots & 0 \\	
%	0 & c_3 & a_3  & b_4 & \ddots & 0 \\
%	\vdots &  \ddots & \ddots & \ddots & \ddots & \vdots \\
%	0 & \cdots & 0 & c_{N-1} & a_{N-1} & b_N \\
%	0 & \cdots & \cdots &0  & c_N & a_N}$,
then its eigenvalues are real and distinct, hence $L$ is diagonalizable by a real matrix of eigenvectors $V$: $V^{-1}LV=\diag(\lambda_i, \ i=1,\dots, N)$.
Moreover, 
the eigenvalues do not change as long as the products $b_kc_k$, $k=2,\dots, N$, do not	change either.
\end{lemma}
\begin{proof}
The proof follows from the fact that the characteristic polynomial of \eqref{GivenTrid} can be recursively defined as follows (Sturm sequence):
\begin{equation}\begin{split}\label{Sturm}
		& p_0(\lambda)=1\ ,\quad p_1(\lambda)=\lambda-a_1 \ , \\
		& p_j(\lambda)=(\lambda -a_j)p_{j-1}(\lambda)-(b_jc_j)p_{j-2}(\lambda) \ , \quad
		j=2,\dots, N\ . 
\end{split} \end{equation}
Note that we must have $b_jc_j\ne 0$, for all $j=2,\dots, N$, since $L$ is unreduced.
\end{proof}

\begin{example}\label{DiffCouple}
	The most commonly studied instance of network tridiagonal matrix $L$ of interest to us is that 
	associated to symmetric nearest-neighbor coupling, called 
	\emph{diffusive coupling} in \cite{Hale}.  That is, one has
	\begin{equation}\label{TridStandard}
		L=\sigma T, \quad T=\bmat{1 & -1 & 0 & 0 & \cdots & 0 \\
			-1 & 2 & -1 & 0 & \cdots & 0 \\	
			\vdots &  \ddots & \ddots & \ddots & \ddots & \vdots \\
			0 & \cdots & 0 & -1 & 2 & -1 \\
			0 & \cdots & \cdots &0  & -1 & 1}\ , 
	\end{equation}	
	where $\sigma>0$ is called the coupling strength.  The eigenvalues $\mu_j$'s and (orthonormal) eigenvectors $v_j$'s of $T$ are well known:
\begin{equation}\label{DiffCoupSpec}\begin{split}
	 \text{For}&\quad j=1,2,\dots, N: \quad \mu_j  = 4\sin^2\left(\frac{(j-1)\pi}{2N}\right) \ , \quad \text{and}\\
	v_{i1}&=\sqrt{\frac{1}{N}} \ ,\,\ v_{ij}=\sqrt{\frac{2}{N}} \cos\left(\frac{(i-0.5)(j-1)\pi}{N}\right)\ ,\,\ i =1,\dots, N \ .
\end{split}\end{equation}
%
%	
%	for $k=1,\dots, N$, they are 
%	$\lambda_k= 4\sin^2\left(\frac{(k-1)\pi}{2N}\right), $
%	and also the 
%	eigenvectors (that are orthogonal in this case) have a simple form.  Namely.
%	
%	 Unfortunately,
%	such simple form of connections between agents is not always adequate for our scopes,
%	see below.
\end{example}

\subsection{Networks and the MSF}
Consider the following system of coupled identical differential equations
\begin{equation}\label{ManyAgents}
\dot \x_i = \f(\x_i)+ \sum_{j=1}^N a_{ij} E (\x_j-\x_i),\,\ i=1,\dots, N ,	
\end{equation}
where $A=(a_{ij})_{i,j=1:N}$,  is the matrix describing the interaction of 
$N$ different agents $\x_i\in \Rn$, $a_{ii}=0, i=1,\dots,N$, $a_{ij}\ge 0$ for $j\ne i$, 
and $E\in \R^{n\times n}$ is the matrix
describing which components of each agent interact with one another.
In general, $A$ represents the structure of a directed graph with weighted edges,
and we will henceforth assume that the graph is connected, hence one can get to
any node starting from any other node, moving along edges (in particular, no row of 
$A$ can be $0$).  We recall that a graph being connected is equivalent to the matrix $A$ being irreducible.

In \eqref{ManyAgents}, separately each agent satisfies the same differential equation
\begin{equation}\label{SingleAgent}
	\dot \x = \f(\x)
\end{equation}
and by the structure of the system %in \eqref{ManyAgents} 
obviously taking $N$ copies of the same solution of 
\eqref{SingleAgent},
$\x_1=\x_2=\dots=\x_N=\x$,  one obtains a solution of \eqref{ManyAgents}.
This is called \emph{synchronous solution} and we denote it as $\x_s$.  Further, 
with 
\begin{equation}\label{SynchSub}
	\Xi=\{\x\in \R^{nN}:\ \x_1=\x_2=\dots =\x_N\ ,\,\x_j\in \Rn, \ j=1,\dots, N\}
\end{equation}
we indicate the {\emph{synchronous subspace}}.

It is convenient to rewrite \eqref{ManyAgents} by defining the new matrix $L$:
$L_{ij}=-a_{ij}$, for $i\ne j$ and $L_{ii}=\sum_j a_{ij}$,
$i,j=1,\dots, N$.  Obviously, $0$ is an eigenvalue of $L$, and since the
graph is connected, $0$ is a simple eigenvalue of $L$.  In general, $L$ is
not symmetric, and it corresponds to what is known as out-degree Laplacian
(see \cite{Veerman}).  In this work, we will restrict to $L$ satisfying the structure of network tridiagonal matrix given in Definition \ref{DefTrid}.

Let $\x=\begin{bmatrix} \x_1 \\ \vdots \\ \x_N \end{bmatrix}\in \R^{nN}$, 
$\F(\x)=\begin{bmatrix} \f(\x_1) \\ \vdots \\ \f(\x_N) \end{bmatrix}$, 
and rewrite \eqref{ManyAgents} as
\begin{equation}\label{ManyAgents2}
	\dot \x = \F(\x)+M \x ,\quad \text{where} \quad 
	M=-L\otimes E\ .	
\end{equation}
In order to ascertain the stability of a synchronous orbit $\x_s$ one needs to study
the   behavior of solutions of \eqref{ManyAgents2} transversal to $\Xi$ and
the {\emph{Master Stability Function}} (MSF) does precisely that.  In fact, 
the MSF tool (originally devised in \cite{PecoraCarroll}) is a 
widely adopted indicator of linearized stability of the synchronous orbit for the system
\eqref{ManyAgents2}.  The power of the technique consists in the replacement of the
large $nN$-dimensional linear system arising from linearizing \eqref{ManyAgents2},
with a single $n$-dimensional parametrized linear system.  Indeed, linearization of
\eqref{ManyAgents2} about the synchronous solution $\x_S$ gives the linear system
$$\dot \y = \bmat{D\f(\x) & & & \\ & D\f(\x) & & \\ & & \ddots & \\ & & & D\f(\x)} \y - (L\otimes E) \y,$$
where $\y=\smat{\y_1 \\ \svdots \\ \y_N}$.  Next, let $V$ be the matrix of
eigenvectors of $L$, and perform
the change of variable $(V^{-1}\otimes I_n) \y \to \y$, to obtain the $N$ 
linear systems of dimension $n$
$$\dot \y_i = D\f(\x) \y_i -\lambda_i E \y_i\ ,$$
where $\lambda_1=0<\lambda_2<\dots \lambda_N$ are the eigenvalues of $L$.
As a consequence, one considers the single parametrized linear system
\begin{equation}\label{ParamLinear}
\dot \z = (A(t) -\eta E)\z\ ,\quad \text{where} \quad \eta\ge 0 \quad \text{and} \quad 
A(t)=D\f(\x(t))\ .
\end{equation}

\begin{defn}\label{DefMSF}
The MSF is defined as the largest Lyapunov exponent (Floquet exponent,
in case the synchronous solution is a periodic orbit) of \eqref{ParamLinear} as $\eta$
ranges over the eigenvalues of $L$.  A negative value of the MSF
implies stability of the synchronous orbit.
\end{defn}

In this work, we will study directly the parametrized linear system \eqref{ParamLinear}
and a-priori decide what range of values of $\eta$ (if any) will give a negative MSF, 
then ask whether or not it is possible to find a network tridiagonal structure (that is, a Laplacian matrix of the form in \eqref{GivenTrid}) whose 
eigenvalues fit the stability region inferred by the MSF.  
%This plan will be carried out
%in Section \ref{LinAlg}.  In Section \ref{NumRes} we will exemplify how our technique
%works.
More in details, if we find that there is a negative value of the MSF for $\eta \in [\eta_1, \eta_2]$, then the  synchronous solution is asymptotically stable for \eqref{ManyAgents2} if 
\begin{equation}\label{l2lN}
\frac{\lambda_2}{\lambda_N}>
\frac{\eta_1}{\eta_2}.
\end{equation}

\begin{rem}\label{StabMSF}
Obviously, the MSF depends on $E$ and it is easy to give examples where
the MSF is negative for some $E$, but positive for some other coupling matrices $E$;
for example, see \cite[Figure 1]{HuangChenLaiPecora}.
Indeed, there is a very vast literature on network synchronization, the MSF, and the interplay
between network topology and negative MSF.  This is fairly evident already at a 
graph theoretic level: e.g., by choosing the network structure of a complete graph,
and the matrix $E$ to be the identity, surely increases synchronizability.  But this
is not a desirable way to proceed, since the type of connections between agents
is not just a mathematical artifact.  For this reason, we will assume that the nearest neighbor interaction (that is, the network tridiagonal form of $L$), and the matrix $E$, are specified by the underlying application and that there is no freedom in changing them, in particular the structure of $L$.  Our goal will be to choose, if possible, the entries of $L$ so that the MSF is negative.
\end{rem}

%More 
%interesting is  the realization (known for
%a long time) that, given a fixed topology, one needs to give up symmetry in order to enhance
%synchronizability, e.g. see \cite{MZK}) and also diagonalizability to obtain optimally
%synchronizable networks (\cite{NM}).  
%In this work we take a constructive point of view:
%given a type of network (presently, tridiagonal) assign the weight of each
%arc in such a way that the MSF is negative. Our construction gives in general a diagonalizable, asymmetric network, confirming previous results on the need to give up symmetry in order to achieve synchronizability.  As far as we know, our approach is
%new and we show that it works in practice.

%Our work is in a similar direction in that
%it also gives up symmetry, but we take a constructive point of view:
%given a type of network (presently, tridiagonal) assign the weight of each
%arc in such a way that the MSF is negative. Our construction gives a  As far as we know, our approach is
%new and we show that it works in practice.

\section{Our inverse network tridiagonal eigenvalue problem}\label{LinAlg}
We will solve our inverse eigenvalue problem in two steps. First, in 
Section \ref{SymmTridAlgo}, we give an algorithm that builds a symmetric, unreduced,
tridiagonal matrix with a given set of distinct eigenvalues.  Then, in Section 
\ref{ZeroSum}, we propose an algorithm that takes a given 
symmetric, singular, unreduced, 
tridiagonal matrix, and it produces an unreduced, generally non-symmetric,
tridiagonal matrix with a specified null vector.

There exists an extensive literature in the general area of inverse eigenvalue problems,
(e.g., see \cite{ChuGolub}).  In fact, our first algorithm (to build a symmetric tridiagonal unreduced matrix with a 
preassigned spectrum) is effectively a known result.  However, to the
best of our knowledge, our result on specifying spectrum and null vector appears
to be new, and it is what we need for obtaining a network leading to a negative
MSF.

\subsection{Symmetric unreduced tridiagonal with given 
spectrum}\label{SymmTridAlgo}
We are interested in solving the following inverse problem. \\
%\begin{itemize}
%\item[{\bf Problem 1}]
%Problem: 
{\emph{Given $N$ real values
$\lambda_1 < \lambda_2 < \cdots < \lambda_N$, find an unreduced,
symmetric, tridiagonal matrix $S$, with negative off diagonal, having these
$\lambda_i$'s as eigenvalues}}.  
%\end{itemize}
To clarify, we are seeking $S$ of the form
\begin{equation}\label{SymmTrid}
	S=\bmat{a_1 & b_2 & 0 & 0 & \cdots & 0 \\
		b_2 & a_2 & b_3 & 0 & \cdots & 0 \\	
		0 & b_3 & a_3  & b_4 & \ddots & 0 \\
		\vdots &  \ddots & \ddots & \ddots & \ddots & \vdots \\
		0 & \cdots & 0 & b_{N-1} & a_{N-1} & b_N \\
		0 & \cdots & \cdots &0  & b_N & a_N}\ ,
\end{equation}
with all $b_i<0$, $i=2,\dots, N$, and eigenvalues $\lambda_1 < \lambda_2 < \cdots < \lambda_N$.

To solve this problem we adopted Algorithm {\tt diag2trid} below.

\begin{algorithm}[H]
 \caption{Algorithm {\tt diag2trid}}\label{algo:diag2trid}
\begin{algorithmic}[1]
\State Set $D=\diag(\lam_1,\ldots,\lam_N)$.
\State Let $\q\in \R^N$ be the vector $\q=\e/\sqrt{N}$.
%a vector of length $1$: $q^Tq=1$,
\State Let $Q$ be a Householder reflection such that $Q\e_1=\q$.
\State Set $A=Q^TDQ$.
\State Perform the \emph{Householder tridiagonalization} algorithm on $A$, so that
$$H^TAH= S \ , $$
where $S$ is tridiagonal and $H$ is orthogonal.
\State By possibly changing the signs of the $b_i$'s, %using a similarity transformation of $S$ with a diagonal matrix $P$ of $\pm 1$, that is letting $S:=PSP$, 
we further enforce that $b_i<0$, $i=2,\dots, N$, to obtain the sought form of $S$.
\end{algorithmic}
\end{algorithm}

\begin{rems}\label{RemsAlgo1}  $\,$%Several comments are needed.
\begin{itemize}
\item[(i)]
With the trivial exception of the last step,
Algorithm {\tt diag2trid} is effectively the same as the one we described, and justified,
in \cite{DiPuArxiv}, where it is also shown to be equivalent to a technique of Schmeisser, see \cite{Schmeisser}, whose
interest was in building an unreduced symmetric tridiagonal matrix whose characteristic polynomial is given.  
%For the stated equivalence result, see \cite{DiPuArxiv}.
A related construction is also summarized in \cite[Theorem 4.7]{ChuGolub:Acta}.  We note that performing a possible change of sign of the $b_i$'s is legitimate
in light of Lemma \ref{SignsOff}.
%, since it amounts to a similarity transformation with a diagonal matrix of $\pm 1$'s.
%where in particular it is shown that the $b_i$'s are not $0$.
\item[(ii)]
Although the choice $\q=\e/\sqrt{N}$ is our default choice, and the one we adopted in the
experiments in Section \ref{NumRes}, there is 
freedom in choosing the unit vector $\q$ in Algorithm {\tt diag2trid}.  This is natural, since the transformation $H$ is such that $H(:,1)=H(1,:)^T=\e_1$, which means 
that $S$ is diagonalized by $(QH)^T$, whose first row is given by $\q$.
Appealing to the Implicit $Q$ Theorem,
see \cite[Theorem 8.3.2]{GVL2013matrixcomp}, we know that a real 
symmetric tridiagonal matrix is completely characterized by its (real) eigenvalues 
and by the first row of the (orthogonal) matrix of its eigenvectors, in the sense that
any two real symmetric, tridiagonal, and unreduced matrices, $S$ and $T$, that are 
diagonalized by two orthogonal matrices having the same first row, must be
equal up to the sign of their off diagonal entries.
%\item[(iii)]
%Interestingly, with this choice, our
%Algorithm is equivalent to a technique of Schmeisser (see \cite{Schmeisser}), whose
%interest was in building a tridiagonal matrix whose characteristic polynomial is
%known.  For the stated equivalence result, see \cite{DiPuArxiv}.
%\item[(iv)]
%%Note that $A$ is positive semi-definite.  Further, n
%Note that since the 
%spectrum is unchanged by changing the sign of the $b_i$'s, we %can always assume 
%that $b_i<0$, $i=2,\dots, n$.  %Finally, we also have that $a_i>0$, $i=1,\dots ,n$
%%(this last statement is justified by the construction giving us $L$ in \eqref{FinalA}).
\end{itemize}
\end{rems}

The next Lemma will be needed below and it clarifies the structure of $S$ produced by Algorithm \ref{algo:diag2trid} when the eigenvalues are $\lambda_1=0 < \lambda_2 < \cdots < \lambda_N$.

\begin{lemma}\label{ai_positive}
Let $S$ as in \eqref{SymmTrid} be produced by Algorithm \ref{algo:diag2trid} relative to eigenvalues $\lambda_1=0 < \lambda_2 < \cdots < \lambda_N$.  Then, the values
$a_i$ in $S$ are all positive: $a_i>0$, $i=1,\dots, N$. 
\end{lemma}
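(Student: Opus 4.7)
The plan is to exploit the fact that the matrix $S$ produced by Algorithm \ref{algo:diag2trid} is positive semidefinite, and then invoke the elementary fact that a zero diagonal entry of a PSD matrix forces the corresponding row and column to vanish, which is incompatible with $S$ being unreduced.

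First I would note that by construction $S = H^T A H = H^T Q^T D Q H = (QH)^T D (QH)$, so $S$ is orthogonally similar to $D = \diag(\lambda_1, \lambda_2, \ldots, \lambda_N) = \diag(0, \lambda_2, \ldots, \lambda_N)$. Since $\lambda_1 = 0$ and $\lambda_i > 0$ for $i = 2, \ldots, N$, the diagonal matrix $D$ is positive semidefinite, and hence so is $S$. In particular, every diagonal entry satisfies $a_i = \e_i^T S \e_i \ge 0$.

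Next I would rule out the possibility $a_i = 0$. Recall the standard fact: if $M$ is symmetric positive semidefinite and $M_{ii} = 0$, then $M_{ij} = 0$ for all $j$. This follows by observing that the $2 \times 2$ principal submatrix $\smat{0 & M_{ij} \\ M_{ij} & M_{jj}}$ must also be PSD, and its determinant $-M_{ij}^2$ must be nonnegative, forcing $M_{ij} = 0$. Applying this to $S$: if $a_i = 0$ for some index $i$, then the entire $i$-th row of $S$ is zero. For $i \ge 2$ this would imply $b_i = S_{i,i-1} = 0$, while for $i \le N-1$ it would imply $b_{i+1} = S_{i,i+1} = 0$. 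Since every $i \in \{1, \ldots, N\}$ satisfies at least one of these conditions (assuming $N \ge 2$; the case $N=1$ being trivial and of no interest), we obtain a contradiction with step 6 of Algorithm \ref{algo:diag2trid}, which guarantees $b_j < 0$ for all $j = 2, \ldots, N$.

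Combining the two observations, $a_i \ge 0$ from positive semidefiniteness and $a_i \ne 0$ from unreducedness, we conclude $a_i > 0$ for every $i = 1, \ldots, N$. There is no serious obstacle in this proof; the only point to articulate carefully is the PSD row/column vanishing fact, which follows from a minor-determinant argument.
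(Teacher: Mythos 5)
Your proof is correct, but it rules out $a_i=0$ by a different mechanism than the paper. Both arguments begin identically, from $S=(QH)^TD(QH)$ with $D\succeq 0$, so that $a_i=\e_i^TS\e_i\ge 0$. To exclude equality, the paper computes $a_j=\sum_{k\ge 2}\lambda_k u_{kj}^2$ with $U=QH$ and shows that $a_j=0$ would force $U\e_j=\pm\e_1$, hence would force the first row of the diagonalizing orthogonal matrix to be $\pm\e_j^T$, contradicting the specific choice $\q=\e/\sqrt N$ in Step 2 of Algorithm \ref{algo:diag2trid}. You instead invoke the standard fact that a vanishing diagonal entry of a PSD matrix kills its whole row (via the $2\times 2$ principal minor $\smat{0 & M_{ij}\\ M_{ij} & M_{jj}}$), which would annihilate an adjacent off-diagonal entry $b_i$ or $b_{i+1}$ and contradict the unreducedness of $S$. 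Your route is legitimate because unreducedness (all $b_i<0$) is built into the form \eqref{SymmTrid} assumed in the statement and asserted as the output of the algorithm; it is also slightly more general, since it shows that \emph{any} unreduced symmetric tridiagonal PSD matrix has strictly positive diagonal, with no reference to the choice of $\q$. The trade-off is that the paper's argument is self-contained relative to the construction (it uses only the choice of $\q$ and does not presuppose that the tridiagonalization produced nonzero off-diagonal entries), whereas yours shifts that burden onto the unreducedness guarantee for $S$. Both are sound; just make sure you regard $b_i<0$ as a hypothesis rather than something your lemma is implicitly helping to establish.
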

\begin{proof}
By construction, we have $S=H^TAH=(QH)^TD(QH)$.  Let $U=QH$, so that $S=U^TDU$.  Therefore, for any $j=1,\dots, N$:
$$a_j=\e_j^TS\e_j=(U\e_j)^TD (U\e_j)=\sum_{k=2}^N\lambda_k(u_{k,j})^2$$
and so --since $\lambda_k>0$ for $k=2,\dots, N$-- we have $a_j>0$ unless $u_{kj}=0$ for $k=2,\dots, N$.  By contradiction, suppose that $u_{kj}=0$ for $k=2,\dots, N$, and some $j=1,\dots, N$.  But then $U\e_j=\pm \e_1$, and since $U$ is orthogonal this means that $\e_1^TU=\pm \e_j^T$.  But, see Remark 4-(ii), $S$ is diagonalized into $D$ by an orthogonal matrix whose first row is $\q^T$, and so this would contradict the choice of $\q$ in Step 2 of Algorithm \ref{algo:diag2trid}, and the thesis follows.
\end{proof}

\subsection{Singular tridiagonal with specific null-vector}\label{ZeroSum}
Next, we consider the following modification of the previous problem. \\

%\begin{itemize}
%	\item {\bf Problem 2}.
{\emph{Given $N$ real values
$\lambda_1=0 < \lambda_2 < \cdots < \lambda_N$, find an unreduced, network
tridiagonal matrix $L$, whose spectrum is given by these values, and such that the eigenvector associated to the $0$-eigenvalue is aligned with $\smat{1\\1 \\ \svdots \\1}$.}} 
%\end{itemize}

Our construction will produce a generally nonsymmetric tridiagonal matrix $L$ as in 
\eqref{GivenTrid} and we will see that --in general-- one cannot require that there is a 
symmetric tridiagonal matrix satisfying our requests.  
The technique we used to resolve this problem is encoded in the following Algorithm,
which will be justified in Theorem \ref{Terminate} below.

%
%\begin{enumerate}
%	\item
%	First, we use Algorithm {\tt diag2trid} of Section \ref{SymmTridAlgo} to obtain
%	a symmetric, unreduced tridiagonal matrix $S$ as in \eqref{SymmTrid} with the
%	desired spectrum.
%	\item
%	Then, we will exploit the following result (e.g., see \cite{Muir}).
%	\begin{lemma}\label{Muir}
%		Given a tridiagonal unreduced
%		matrix \eqref{GivenTrid},
%		the eigenvalues do not change as long as the product $b_kc_k$ does not
%		change either.
%	\end{lemma}
%	\begin{proof}
%	The result is a simple consequence of the recursive definition of the 
%	characteristic polynomial of \eqref{GivenTrid} as follows:
%	\begin{equation*}\begin{split}
%			& p_0(\lambda)=1\ ,\quad p_1(\lambda)=\lambda-a_1 \ , \\
%			& p_j(\lambda)=(\lambda -a_j)p_{j-1}(\lambda)-(b_jc_j)p_{j-2}(\lambda) \ ,\quad
%			j=2,\dots, n\ . 
%	\end{split} \end{equation*}
%	Note that we must have $b_jc_j\ne 0$, for all $j=2,\dots, n$.
%	\end{proof}
%\end{enumerate}
%
%This gave the idea for the following algorithm, which is valid for as long as
%the below quantities $a_{k-1}+\frac{b_{k-1}}{\alpha_{k-1}}\ne 0$, for $k=2,\dots, n-1$,
%and with $b_1=0$.  Our Theorem \ref{Terminate} below shows that this is
%indeed the case for our problem. 

\medskip

\begin{algorithm}[H]
	\caption{{\tt TridZeroRowSum}}\label{algo:zerorowsum}
	\begin{algorithmic}[1]
		\State Using Algorithm \ref{algo:diag2trid}, {\tt diag2trid}, generate an unreduced symmetric
		tridiagonal matrix $S$ as in  \eqref{SymmTrid} with eigenvalues
		$\lambda_1=0 < \lambda_2 < \cdots < \lambda_N$.
		\State 	Modify the first $N-1$ rows of $S$ as follows. 
		\hfill\newline
		(a) Let $\alpha_2$ such that $a_1+\alpha_2 b_2=0$.\hfill\newline
		(b) For $k=3,\dots, N$, let $\alpha_k$: 
		$\alpha_k b_k+a_{k-1}+\frac{b_{k-1}}{\alpha_{k-1}}=0$.
		\State 	
		The desired $L$ is then:
			\begin{equation}\label{FinalA}
				L=\bmat{a_1 & \alpha_2 b_2 & 0 & 0 & \cdots & 0 \\
					b_2/\alpha_2 & a_2 & \alpha_3 b_3 & 0 & \cdots & 0 \\	
					0 & b_3/\alpha_3 & a_3  & \alpha_4 b_4 & \ddots & 0 \\
					\vdots &  \ddots & \ddots & \ddots & \ddots & \vdots \\
					0 & \cdots & 0  & b_{N-1}/\alpha_{N-1} & a_{N-1} & \alpha_N b_N \\
					0 & \cdots & 0 & 0  & b_N/\alpha_N & a_N}\ .
			\end{equation}
%    where, by possibly changing their signs, the off-diagonal entries are further selected to be negative.
	\end{algorithmic}
\end{algorithm}

\begin{thm}\label{Terminate}
	The above Algorithm \ref{algo:zerorowsum}, {\tt TridZeroRowSum}, is well defined and
	terminates with an unreduced tridiagonal matrix with
	spectrum given by $\{0,\lambda_2,\cdots, \lambda_n\}$, and eigenvector associated
	to the $0$-eigenvalue given (up to normalization) by
	$\e=\bmat{1\\ \vdots \\ 1}$.
\end{thm}
\begin{proof}
%The proof follows from Lemma \ref{SignsOff} if we show that $\alpha_i \neq 0$, for all $i=2, \ldots ,N$. Clearly $\alpha_2=-\frac{b_2}{a_1} \neq 0$. Assume by contradiction that there exists $k \in \{3, \ldots, N\}$ such that $\alpha_k=0$. Then it must be: 
%$\frac{b_{k-1}}{\alpha_{k-1}}+a_{k-1}=0$, and this implies that the principal submatrix of order $k$ of $L$ must be singular. This contradicts Lemma \ref{BorderEvs} below 
%since, by Lemma \ref{SignsOff}, the principal submatrices of  order $k$ of $L$ and $S$ have same eigenvalues.
%}
The algorithm is well defined as long as $\alpha_i \neq 0$ for all $i = 2, \ldots, N$. 
%Note that $\alpha_2 = -\frac{b_2}{a_1} \neq 0$. 
%Assume, by contradiction, that there exists $k \in \{3, \ldots, N\}$ such that $\alpha_k = 0$. This would imply  
%$$
%\frac{b_{k-1}}{\alpha_{k-1}} + a_{k-1} = 0,
%$$
%which makes the principal submatrix of order $k$ of $L$ singular. However, this contradicts Lemma \ref{BorderEvs} because, by Lemma \ref{SignsOff}, the principal submatrices of order $k$ of $L$ and $S$ have the same eigenvalues.  
%
%% ---
%If the algorithm is well defined, that is if the $\alpha_j$'s are not $0$,
%then we have the relation
%$$L=D^{-1}SD\ ,\, \ D=\text{diag}(1,\alpha_2, \alpha_2\alpha_3, \dots, \alpha_2\cdots \alpha_N)\ $$
%%by Lemma \ref{Muir} 
%and the result on the eigenvalues of $L$ in \eqref{FinalA} follows.
%
%Next, let $\v\ne 0$ be a unit eigenvector of $S$ associated to its $0$ 
%eigenvalue: $S\v=0$, $\|\v\|=1$.
%Then, to say that the $\alpha_j$'s are not $0$ and that we can take 
%$L\e=0$ is the same as
%%since $DSD^{-1}D\v=0$, 
%%the statement $L\e=0$ is equivalent to 
%the statement
%$$D^{-1}\v=c \e\ ,\quad \text{for some constant} \quad c\ne 0\ .$$
%% ---
%But this last relation can be uniquely satisfied if no component of $\v$ is $0$, which is 
%guaranteed since $S$ is unreduced, see Lemma \ref{Unred}, and the result on the eigenvector associated to the $0$-eigenvalue follows.  
	Next, using Lemma \ref{BorderEvs} below,
	we show that the algorithm must complete the first $N-1$ steps,
	that is that, with $b_1=0$, for $k=2,\dots,N-1$ we have:
	\begin{equation*}
	\text{there exists, unique, } \alpha_k \ne 0 \text{ such that } \alpha_k b_k+a_{k-1}+\frac{b_{k-1}}{\alpha_{k-1}}=0, \\
	\end{equation*}
	or, equivalently,
	\begin{equation*}
	a_{k-1}+\frac{b_{k-1}}{\alpha_{k-1}}\ne 0. 
	\end{equation*}
	
Now, for $k=2$, we need to show that $a_1\ne 0$.  But this follows from 
		Lemma \ref{ai_positive}.

For $k>2$, we need to show that 
		$$a_{k-1}+\frac{b_{k-1}}{\alpha_{k-1}}\ne 0 . $$
		By contradiction, suppose that 
		$a_{k-1}+\frac{b_{k-1}}{\alpha_{k-1}}= 0$ and consider the principal minor
		of order $k-1$ of the matrix $B_{k-1}$ obtained during the reduction process, that is
		$$B_{k-1}=\bmat{a_1 & \alpha_2 b_2 & 0 & 0 & \cdots & 0 \\
			b_2/\alpha_2 & a_2 & \alpha_3 b_3 & 0 & \cdots & 0 \\	
			0 & b_3/\alpha_3 & a_3  & \alpha_4 b_4 & \ddots & 0 \\
			\vdots &   & \ddots & \ddots & \ddots & \vdots \\
			0 & \cdots & 0 & & b_{k-1}/\alpha_{k-1} & a_{k-1}}\ .$$
		But then this matrix is singular, since we would have that 
		$$B_{k-1}\bmat{1 \\ 1\\ \vdots \\1}=0$$
		and in particular $0$ would be an eigenvalue of $B_{k-1}$.  But, 
		%because of Lemma \ref{Muir}, 
		the eigenvalues of $B_{k-1}$ are the same as the eigenvalues of the principal minor
		of order $k-1$ of $S$, that is of
		$$S_{k-1}=\bmat{a_1 & b_2 & 0 & 0 & \cdots & 0 \\
			b_2 & a_2 & b_3 & 0 & \cdots & 0 \\	
			0 & b_3 & a_3  & b_4 & \ddots & 0 \\
			\vdots &   & \ddots & \ddots & \ddots & \vdots \\
			0 & \cdots & 0 & & b_{k-1} & a_{k-1}}\ ,$$
		and so we would have found that a principal minor of $S$ is singular,
		contradicting Lemma \ref{BorderEvs}.
		
		Finally, since 
			by construction the first $N-1$ rows of $L$ have $0$-sum and are linearly independent, then the last row must also have $0$-sum (since it is linearly dependent on the previous $N-1$, given that $0$ is an eigenvalue).  	
			
		Therefore, the algorithm produces the matrix $L$ in \eqref{FinalA}, which clearly has the desired spectrum and rows that sum to zero. This concludes the proof.				
\end{proof}

\begin{lemma}\label{Unred}
Let $S\in \R^{N\times N}$ be a symmetric matrix with a $0$ eigenvalue.  Assume that $0$ is a simple
eigenvalue and let $\v$ be an associated eigenvector of length $1$.  
If no component of $\v$ is $0$, then $S$ is irreducible.  Conversely,
if $S$ is unreduced and tridiagonal, hence irreducible, 
with eigenvalues $\lambda_1=0<\lambda_2< \dots < \lambda_N$, then $v_i\ne 0$, for all $i=1,\dots, N$.
\end{lemma}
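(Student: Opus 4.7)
For the forward implication I would argue by contrapositive: assume $S$ is reducible, and produce a zero component in $\v$. Since $S$ is symmetric, the Remark following Assumption~1 gives a permutation $P$ with $P^TSP=\diag(S_{11},S_{22})$, both blocks symmetric. Because the spectrum of $S$ is the union of the spectra of $S_{11}$ and $S_{22}$, and $0$ is a simple eigenvalue of $S$, exactly one of the two blocks, say $S_{11}$, has $0$ in its spectrum. Every null vector of $P^TSP$ then has the form $\smat{\w \\ 0}$ with $\w\in\ker S_{11}$, so $\v = P\smat{\w \\ 0}$ vanishes in every position associated with the $S_{22}$-block, contradicting the hypothesis that $v_i\ne 0$ for all $i$.

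For the converse, my plan is to exploit two classical properties of unreduced symmetric tridiagonal matrices that follow directly from the Sturm recursion \eqref{Sturm}: (a) all eigenvalues are simple, and (b) the eigenvalues of consecutive leading principal submatrices $S_{j-1}$ and $S_j$ strictly interlace. Let $S_j$ denote the leading $j\times j$ principal submatrix of $S$. Chaining (b) starting from $\lambda_{\min}(S_N)=0$ yields
\[
\lambda_{\min}(S_1) > \lambda_{\min}(S_2) > \cdots > \lambda_{\min}(S_{N-1}) > \lambda_{\min}(S_N) = 0,
\]
so every $S_j$ with $j\le N-1$ is positive definite and hence invertible.

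Now suppose, for contradiction, that $v_k=0$ for some $1\le k\le N$. I would first reduce to the case $v_1=0$. If $k\ge 2$, the first $k-1$ rows of $S\v=0$ read exactly $S_{k-1}(v_1,\dots,v_{k-1})^T=0$, because the only coupling to $v_k$ in those rows is through the $(k-1,k)$ entry $b_k$, whose contribution vanishes since $v_k=0$. Invertibility of $S_{k-1}$ then forces $v_1=\cdots=v_{k-1}=0$, and in particular $v_1=0$; for $k=1$ this is immediate. With $v_1=0$ in hand, the first row of $S\v=0$ gives $b_2 v_2=0$, hence $v_2=0$ since $b_2\ne 0$, and row $j$ combined with the unreduced condition $b_{j+1}\ne 0$ inductively force $v_{j+1}=0$ for every $j\ge 1$, contradicting $\v\ne 0$.

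The only nontrivial ingredient, and what I expect to be the main obstacle, is invoking the strict interlacing of leading principal submatrix spectra; once this is granted, the conclusion reduces to a short two-step propagation argument through the tridiagonal recurrence.
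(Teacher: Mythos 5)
Your proof is correct and follows essentially the same route as the paper's: the forward direction is the same contrapositive block-decomposition argument, and the converse rests on the same key fact (strict interlacing of the eigenvalues of leading principal submatrices, which is the content of the paper's Lemma~\ref{BorderEvs}) to make $S_{k-1}$ positive definite and force $v_1=\cdots=v_{k-1}=0$. The only deviation is in the endgame of the converse: where the paper invokes positive definiteness of the \emph{trailing} principal submatrix $T_2$ to kill the remaining components, you instead propagate $v_1=0$ forward through the unreduced three-term recurrence ($b_{j+1}\ne 0$ forces $v_{j+1}=0$), which is an equally valid and slightly more elementary finish.
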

\begin{proof}
We show that, if no component of $\v$ is $0$, then $S$ is irreducible, by showing
that if $S$ is reducible, then there exists some $i$: $v_i=0$.
Indeed, if $S$ is reducible, then for some permutation $P$ we have $PSP^T=\bmat{S_1 & 0 \\ 0 & S_2}$, with  $S_1=S_1^T\in \R^{n_1\times n_1}$ and $S_2=S_2^T\in \R^{n_2\times n_2}$, $n_1,n_2\ge 1$ and $n_1+n_2=N$. So, we have $PSP^TP\v=0$, or $\bmat{S_1 & 0 \\ 0 & S_2}\bmat{\w_1\\ \w_2}=0$  with $\bmat{\w_1\\ \w_2}=P\v$.
%Indeed, if $S$ is reducible, then for some permutation $P$ we have $PSP^TP\v=0$ or $PSP^T \w=0$ with $\w=P\v$, that is $\bmat{S_1 & 0 \\ 0 & S_2}\bmat{\w_1\\ \w_2}=0$, with $S_1=S_1^T\in \R^{n_1\times n_1}$ and $S_2=S_2^T\in \R^{n_2\times n_2}$, $n_1,n_2\ge 1$ and $n_1+n_2=n$.  
Then, since the kernel of $S$ is 1-dimensional, we must have either $\w_1=0$ or $\w_2=0$,
giving the claim.

To show the converse statement, for $S$ unreduced and tridiagonal, 
suppose that $v_k=0$ for some $1\le k\le N$.  First, note that if $k=1$, 
then --
writing $S=\bmat{a_1 & \b^T \\ \b & B}$--  since $S\bmat{0\\ \v_2}=0$, 
$\v_2\in \R^{N-1}\ne 0$, then $B\v_2=0$ and this means that
$B$ has a $0$ eigenvalue, which contradicts Lemma \ref{BorderEvs} below.
Next, suppose $v_k=0$ and $k>1$.  Partition $S$ as follows:
$$S=\bmat{T_1 & \bmat{b_{k}\e_{k-1} & 0} \\ \bmat{b_{k}\e_{k-1}^T \\0} & T_2}$$
where $T_1$ is tridiagonal, unreduced,
of size $(k-1, k-1)$, and $T_2$ is tridiagonal, unreduced, of size $(N-k+1, N-k+1)$.
Writing $\v=\bmat{\v_1 \\0 \\ \v_2}$, $\v_1\in \R^{k-1}$, $\v_2\in \R^{N-k}$, then we 
must have $T_1\v_1=0$, and --because of Lemma \ref{BorderEvs}--
$T_1$ is invertbile
and this implies that $\v_1=0$.  Thus, we also have 
$T_2\bmat{0\\ \v_2}=0$ and thus, either $\v_2=0$ if $T_2$ is
invertible, contradicting that $\v\ne 0$, or $T_2$ is singular, contradicting
Lemma \ref{BorderEvs}.
\end{proof}

\begin{lemma}\label{BorderEvs}
	Given a symmetric, unreduced, tridiagonal matrix $S$ as in \eqref{SymmTrid}, with $N>1$, and
	with eigenvalues $\lambda_1=0<\lambda_2< \dots < \lambda_N$.  Then, any 
	leading (respectively, trailing) principal submatrix of $S$ of size $(p,p)$, $1\le p\le N-1$, is positive definite.%cannot be singular.
 \footnote{A leading (respectively, trailing) principal submatrix of size
	$(N-p,N-p)$, $p=1,\dots, N-1$, is the matrix obtained by deleting the bottom
	(respectively, top) $p$ rows and columns of $S$.}
\end{lemma}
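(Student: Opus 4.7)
The plan is to combine Cauchy's interlacing theorem with the strict interlacing property enjoyed by unreduced symmetric tridiagonal matrices. As a preliminary remark, because the eigenvalues of $S$ are $0,\lambda_2,\dots,\lambda_N$ with $\lambda_i>0$ for $i\ge 2$, the matrix $S$ is positive semidefinite; any principal submatrix of a positive semidefinite matrix is itself positive semidefinite, so the problem reduces to excluding $0$ from the spectrum of each leading or trailing $(p,p)$ submatrix.

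For the leading case, let $S_p$ denote the leading principal submatrix of size $p$ and let $p_k(\lambda)=\det(\lambda I-S_k)$. Using the Sturm recurrence already recorded in \eqref{Sturm} (with $b_k c_k=b_k^2>0$ in the symmetric case), I would establish the strict interlacing statement: the $k-1$ roots of $p_{k-1}$ strictly separate the $k$ roots of $p_k$. The key observation is that at a root $\mu$ of $p_{k-1}$ the recurrence forces $p_k(\mu)=-b_k^2\,p_{k-2}(\mu)$, so $p_k$ and $p_{k-2}$ have opposite signs at every root of $p_{k-1}$; a short induction on $k$ combined with the sign of the leading coefficient then yields strict interlacing. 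Denoting by $\mu_1^{(k)}$ the smallest root of $p_k$, this interlacing gives $\mu_1^{(k)}<\mu_1^{(k-1)}$. Iterating from $k=N$ down to $k=p+1$ produces
\[
0=\mu_1^{(N)}<\mu_1^{(N-1)}<\cdots<\mu_1^{(p)},
\]
so $S_p$ is positive definite for every $1\le p\le N-1$.

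For the trailing case, I would introduce the reversal permutation $P$ defined by $P\e_k=\e_{N+1-k}$; then $PSP^T$ is again symmetric, unreduced, tridiagonal, with the same spectrum as $S$, and its leading $(p,p)$ submatrix coincides with the trailing $(p,p)$ submatrix of $S$, so the argument above applies verbatim. The only non-routine step is the strict interlacing claim; it is classical, but relies crucially on the unreduced hypothesis $b_k\ne 0$, which prevents consecutive characteristic polynomials from sharing a root and keeps each inequality strict all the way down to $\mu_1^{(p)}>0$.
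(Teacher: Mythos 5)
Your proposal is correct and follows essentially the same route as the paper: both rest on the strict (proper) interlacing of eigenvalues between an unreduced symmetric tridiagonal matrix and its leading principal submatrices, chained to give $0=\mu_1^{(N)}<\mu_1^{(N-1)}<\cdots<\mu_1^{(p)}$, with the trailing case handled by symmetry. The only difference is that you sketch a Sturm-sequence proof of the strict interlacing step, whereas the paper simply cites it (Horn--Johnson, Problem 4.3.P17).
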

\begin{proof}
	The proof follows from a refinement of classic results on interlacing of 
	eigenvalues for
	symmetric tridiagonal matrices.	In particular, the following result holds
	(see \cite[Problem 4.3.P17]{HJ-MatAna}):\\
	\smallskip\noindent
	{\emph{``Given $M=\bmat{C & \c \\ \c^T & d}$, symmetric, tridiagonal and unreduced, with $C\in \R^{N-1,N-1}$.
		Let $\mu_1 < \mu_2 < \dots < \mu_N$ be the eigenvalues of $M$, and let
		$\nu_1<\nu_2 < \dots < \nu_{N-1}$ be the eigenvalues of $C$.  Then, the $\nu_j$'s
		interlace properly the $\mu_i$'s.  That is, we have
		$$\mu_1<\nu_1 <\mu_2 <\nu_2 < \dots \mu_{N-1} < \nu_{N-1}< \mu_N.$$
	The result also holds if we partition $M=\bmat{a & \b^T \\ \b & C}$. ''}}
	
	We show the result for the leading principal submatrices.
	Let $S_k$, $k=1,2\dots, N-1$, be the principal submatrices of order $k$ of $S$,
	and let $\lambda_1^{(k)}<\dots <\lambda_k^{(k)}$ be their eigenvalues.  
	Using the proper interlacing result quoted above, in particular we must have:
	$$\lambda_1^{(N)}=\lambda_1=0<\lambda_1^{(N-1)}<\lambda_1^{(N-2)}<\dots <
	\lambda_1^{(1)}$$
	and the result follows.  The case of trailing principal submatrices is identical.
\end{proof}

We will also need the following result that refines Lemma \ref{Unred} relative to the eigenvector of $S$ associated to the $0$ eigenvalue.

\begin{lemma}\label{Ratios}
Let $S$ be a symmetric, unreduced, tridiagonal matrix as in
\eqref{SymmTrid}, with $N>1$, and with eigenvalues $\lambda_1=0<\lambda_2< \dots < \lambda_N$.  Let $\v$ be a unit eigenvector of $S$ associated to the $0$-eigenvalue.  Then, the entries of $\v$ all have the same sign. 
\end{lemma}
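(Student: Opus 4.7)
The plan is to show inductively that consecutive entries of $\v$ have the same sign, by combining the non-vanishing guarantee from Lemma \ref{Unred} with the positive-definiteness of leading principal submatrices established in Lemma \ref{BorderEvs}.

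First, by Lemma \ref{Unred} (applied in the converse direction, using that $S$ is unreduced tridiagonal with a simple $0$ eigenvalue), no component of $\v$ vanishes, so signs are well defined. Next, for each $k=1,\dots,N-1$, let $S_k$ denote the leading $k\times k$ principal submatrix of $S$. Because $S$ is tridiagonal, the only entry of $S$ linking the first $k$ rows to columns $k{+}1,\dots,N$ is $S_{k,k+1}=b_{k+1}$. Therefore the first $k$ equations of $S\v=0$ read
$$S_k\,\v_{1:k} + b_{k+1}\,v_{k+1}\,\e_k = 0 ,$$
where $\v_{1:k}=(v_1,\dots,v_k)^T$ and $\e_k$ is the $k$-th column of $I_k$. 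By Lemma \ref{BorderEvs}, $S_k$ is positive definite, so $S_k^{-1}$ is positive definite as well, and in particular $(S_k^{-1})_{kk}=\e_k^T S_k^{-1}\e_k>0$. Reading off the $k$-th component of $\v_{1:k}=-b_{k+1}v_{k+1}S_k^{-1}\e_k$ gives
$$v_k = -b_{k+1}\,(S_k^{-1})_{kk}\,v_{k+1}.$$
Since $b_{k+1}<0$ by hypothesis, the factor $-b_{k+1}(S_k^{-1})_{kk}$ is strictly positive, so $v_k$ and $v_{k+1}$ have the same sign. Iterating over $k=1,\dots,N-1$ yields the claim.

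I do not expect a real obstacle here: the only point that truly needs tridiagonality is the reduction of the coupling between the top $k$ rows of $S$ and the remaining entries of $\v$ to the single scalar $b_{k+1}$, and everything else is a packaging of standard properties of positive-definite matrices together with the already-proved Lemma \ref{BorderEvs}. One could alternatively mirror the argument using trailing principal submatrices, which by the same lemma are also positive definite, but the forward version above suffices.
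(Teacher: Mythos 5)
Your proof is correct, but it takes a different route from the paper's. The paper's argument invokes an explicit formula (from Molinari) expressing the eigenvector entries through the Sturm sequence \eqref{Sturm}, namely $v_k = c\,p_{k-1}(0)/(b_2\cdots b_k)$, and then deduces $v_{k+1}/v_k>0$ from $b_{k+1}<0$ together with the fact that the characteristic polynomials of consecutive leading principal submatrices take values of opposite sign at the origin (a consequence of the proper interlacing used in Lemma \ref{BorderEvs}). You instead extract the ratio directly from the block structure of $S\v=0$: the identity $S_k\v_{1:k}+b_{k+1}v_{k+1}\e_k=0$ combined with the positive definiteness of $S_k$ from Lemma \ref{BorderEvs} gives $v_k=-b_{k+1}(S_k^{-1})_{kk}\,v_{k+1}$ with a strictly positive coefficient. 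The two arguments are close under the hood, since by Cramer's rule $(S_k^{-1})_{kk}=\det(S_{k-1})/\det(S_k)$, which is exactly the ratio of characteristic-polynomial values the paper analyzes; but your version is more self-contained (no external eigenvector formula needed) and, as a bonus, it re-derives the non-vanishing of the entries of $\v$ (each $v_k$ is a positive multiple of $v_N$, and $v_N\ne 0$ since $\v\ne 0$), so the appeal to Lemma \ref{Unred} is actually optional in your argument. The only mild caution is that your chain of identities uses $S_k$ for $k=1,\dots,N-1$ only, which is precisely the range covered by Lemma \ref{BorderEvs}, so there is no gap.
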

\begin{proof}
We are going to use a beautiful relation between the entries of the eigenvector and the Sturm sequence \eqref{Sturm}. Using \cite[Formula (15)]{Molinari}, it holds that 
$$v_k = c \ \frac{p_{k-1}(0)}{b_2\dots b_k},\,\ k=2,\dots, N,$$
where $c$ is a nonzero constant fixing $v_1$.
Because of Lemma \ref{Unred}, we know that all entries of $\v$ are not $0$ and thus we can write
$$\frac{v_{k+1}}{v_k}=\frac{1}{b_{k+1}} \frac{p_k(0)}{p_{k-1}(0)}\ .$$
In this last expression, both fractions in the right-hand-side are negative values.  In fact, for the first fraction this is obvious, since $b_{k+1}<0$; the second fraction is the ratio between the characteristic polynomials of the leading principal minors of order $k$ and $k-1$, evaluated at $0$.  But, because of the proper interlacing result on the eigenvalues of the principal minors (see the proof of Lemma \ref{BorderEvs}), the polynomials $p_k$ and $p_{k-1}$ assume opposite values at the origin, and so the ratio $p_k(0)/p_{k-1}(0)<0$ and the result follows.
\end{proof}

Finally, we conclude this section with the following result that summarizes the fact that with our construction we obtain a network tridiagonal matrix satisfying the structural form of Definition \ref{DefTrid}, which is what we wanted to achieve.  
%The result below also highlights that the vector $q$ in Step 2 of Algorithm \ref{algo:diag2trid} cannot be any unit vector (see
%also Remark \ref{RemsAlgo1}-(ii)).

\begin{thm}\label{NetworkL}
The matrix $L$ in \eqref{FinalA} satisfies the structural
assumptions of Definition \ref{DefTrid}; in particular, all
values $a_i$, $i=1,\dots, N$, in \eqref{SymmTrid} are strictly positive and the off diagonal entries $\alpha_i b_i$ (and of course $b_i/\alpha_i)$, $i=2,\dots, N$, are strictly negative.
\end{thm}
\begin{proof}
By looking at $L$ in \eqref{FinalA}, we observe that the $a_i$'s are the $a_i$'s of $S$ produced by Algorithm \ref{algo:diag2trid}, hence they are strictly positive because of Lemma \ref{ai_positive}.  Also, the values of $b_i$, $i=2,\dots, N$, are negative because they come from $S$.  So, we now show that the $\alpha_i$'s are
positive and the result will follow, since (by construction) the sum of the entries in each row is $0$.

Let $\v$ be the eigenvector of $S$ associated to the $0$-eigenvalue.  As in the proof of Theorem \ref{Terminate}, and because the $\alpha_j$'s are not zero in light of Lemma \ref{Unred}, we have the following relation between the entries of $\v$ and the $\alpha_j$'s, for some nonzero value of $c$:
$$c=v_1,\,\ c\alpha_2=v_2,\,\ c\alpha_2\alpha_3=v_3,\,
\dots ,\, c\alpha_2\cdots \alpha_N=v_N\ .$$
Therefore, we have that 
$$\alpha_k=\frac{v_k}{v_{k-1}}\ ,\,\ k=2,\dots, N ,$$
and using Lemma \ref{Ratios} the result follows.
\end{proof}

\subsection{Impact of symmetry}
In Example \ref{NoSymmNoCry}, we point out that, in general, one cannot also require that the sought tridiagonal matrix be symmetric.  
%To validate this claim, the following example suffices.

\begin{example}\label{NoSymmNoCry}
Suppose that, given $\lambda_1=0<\lambda_2<\lambda_3$, there exists a 
$3\times 3$ real symmetric unreduced tridiagonal matrix $T$ such that
(i) $T$ has eigenvalues $\{0, \lambda_2, \lambda_3\}$, and
(ii) the kernel of $T$ is spanned by $\bmat{1 \\ 1 \\ 1}$.
To satisfy condition (ii), $T$ must have the form
$T=\bmat{x & -x & 0 \\ -x & x+y & -y \\ 0 & -y & y }$, and 
to satisfy also condition (i), $x$ and $y$ must satisfy
$\begin{cases}
		2(x+y)=\lambda_2+\lambda_3 \\
		3xy=\lambda_2\lambda_3
\end{cases}$.
Solving with respect to $x$ and $y$ yields (uniquely) two pairs 
of solutions:
\begin{equation*}
	\begin{cases}
		x = \frac{1}{12}\left(\pm\sqrt{3} \sqrt{3 \lambda_2^2 - 10 \lambda_2\lambda_3 
			+ 3 \lambda_3^2} + 3 \lambda_2 + 3 \lambda_3\right) \\[.2cm]
		y = \frac{1}{12}\left(\mp\sqrt{3} \sqrt{3 \lambda_2^2 - 10 \lambda_2\lambda_3
			+ 3 \lambda_3^2} + 3 \lambda_2 + 3 \lambda_3\right) .
	\end{cases}
\end{equation*}
But, for $x$ and $y$ to be real valued, we must have
\begin{equation*}\label{eq:3by3ineq}
	10 \lambda_2\lambda_3 \le 3 \lambda_2^2 + 3 \lambda_3^2
\end{equation*}
In conclusion, there exists a real matrix 
$T=\bmat{x & -x & 0 \\ -x & x+y & -y \\ 0 & -y & y }$ having eigenvalues 
$0<\lambda_2<\lambda_3$ if and only if 
$10 \lambda_2\lambda_3 \le 3 \lambda_2^2 + 3 \lambda_3^2$, which is not
necessarily satisfied.
\end{example}

\section{Symmetric Tridiagonal Networks and the MSF}\label{TridMSF}
In this section we give some new results on the ratio between the first positive and the largest eigenvalue of symmetric positive semi-definite network tridiagonal matrices and further highlight the impact that these results have on synchronizability of symmetric tridiagonal networks. 

For every $N\in\N$, with $N\ge2$, we let $\cS$ denote the set of real $\NxN$ matrices as in \eqref{SymmTrid} that further are network tridiagonal matrices (that is, for which the entries on each row add up to zero) and that are positive semi-definite.  Further, we indicate with $T$ the $\NxN$ matrix in \eqref{TridStandard} (obviously, an element of $\cS$).
	
%	 defined through the following additional conditions:
%\begin{equation}
%	T(i,j)=-1, \text{ for all } i, j \text{ such that } \abs{i-j}=1.
%\end{equation}

As usual, we let the eigenvalues be ordered in increasing fashion.  Namely, for given $S\in\cS$, we denote its eigenvalues by
\begin{equation*}
%	0\le\lam_2(S)\le\ldots\le\lam_n(S).
	\lam_1=0<\lam_2<\ldots<\lam_N \quad \text{and for} \quad T \quad \text{we call them} \quad	\mu_1=0<\mu_2< \ldots < \mu_N\ .
\end{equation*}

\begin{rem}\label{LinCombForS}
We observe that any $G\in\cS$ is a linear combination of elementary matrices  $E_1,\ldots,E_{N-1}$:
\begin{equation}\label{eq:linCombEk}
	G=\sum_{k=1}^{N-1}x_kE_k, \text{ for some } x_1,\ldots,x_{N-1}\in\R ,
\end{equation}
where for any $k=1,\ldots,N-1$, $E_k$ is the $N\times N$ matrix defined by
\begin{equation*}
	E_k(i,j)=
	\begin{cases}
		1 & \text{if } (i,j)=(k,k) \text{ or } (i,j)=(k+1,k+1),\\
		-1 & \text{if } (i,j)=(k,k+1) \text{ or } (i,j)=(k+1,k),\\
		0 & \text{otherwise}.
	\end{cases}
\end{equation*}
For instance, when $N=4$ we have
\begin{equation*}
	E_1=\bmat{
		1 & -1 & 0 & 0 \\
		-1 & 1 & 0 & 0 \\
		0 & 0 & 0 & 0 \\
		0 & 0 & 0 & 0 \\
	},\
	E_2=\bmat{
		0 & 0 & 0 & 0 \\
		0 & 1 & -1 & 0\\
		0 & -1 & 1 & 0 \\
		0 & 0 & 0 & 0 \\
	},\
	E_3=\bmat{
		0 & 0 & 0 & 0 \\
		0 & 0 & 0 & 0 \\
		0 & 0 & 1 & -1\\
		0 & 0 & -1 & 1\\
	}.
\end{equation*}
\end{rem}

The following result for parameter dependent symmetric matrices with distinct eigenvalues is well known (e.g., see \cite{Tao})),
and it will be used in the proof of Theorem \ref{thm:diffCoupl} below.
\begin{lemma}[First and second Hadamard variation formulae]\label{lem:varFormulas} Let $A(t)$ be a real symmetric matrix valued function that is twice continuously differentiable in $t$, and such that $A(t)$ has simple eigenvalues for all $t$. Then, also its eigenvalues $\lam_k(t)=\lam_k(A(t))$ and corresponding orthonormal eigenvectors $\u_k(t)=\u_k(A(t))$ depend smoothly on $t$ and we have:
	\begin{equation}\label{eq:Hada1}
		\dot\lam_k(t)=\u_k^T(t)\dot A(t) \u_k(t),
	\end{equation}
	\begin{equation}\label{eq:Hada2}
		\ddot\lam_k(t)=\u_k^T(t)\ddot A(t) \u_k(t)+2\sum_{j\ne k}\frac{\abs{\u_j^T(t)\dot A(t)\u_k(t)}^2}{\lam_k(t)-\lam_j(t)}.
	\end{equation}
\end{lemma}

\begin{thm}\label{thm:diffCoupl} For all integers $N\ge3$, and with the notation above, we have:
	\begin{equation}\label{eq:diffCoupl}
		\max_{S\in \cS} \frac{\lam_2(S)}{\lam_N(S)} =\frac{\mu_2}{\mu_N}=\frac{\sin\left(\frac{\pi}{2N}\right)^2}{\sin\left(\frac{(N-1)\pi}{2N}\right)^2},
	\end{equation}
	and the maximum is attained only at scalar multiples of $T$. 
\end{thm}
\begin{proof}
With notation from \eqref{TridStandard}, we have
\begin{equation*}
	T\v_k=\mu_k\v_k, \text{ for all } k=1,\ldots,N\ .
\end{equation*}
Given any $S\in\cS$ that is not a scalar multiple of $T$, define the following matrix valued function
\begin{equation*}
	A(t)=(1-t)S+tT,\ t=[0,1],
\end{equation*}
and the following real valued function, which is well defined and strictly positive for all $t\in[0,1]$:
\begin{equation}
	r(t)=\frac{\lambda_2(A(t))}{\lambda_N(A(t))}.
\end{equation}
We are going to show that:
\begin{enumerate}[(i)]
	\item $\dot r(1)=0$;
	\item $r$ is non-decreasing for all $t\in[0,1]$;
	\item $r$ is strictly increasing for all $t\in[1-\delta,1]$, for some $0<\delta\le1$. 
\end{enumerate}
Then the conclusion of the theorem will follow from (i)-(iii). In particular, (iii) shows that the maximum in \eqref{eq:diffCoupl} is attained only at scalar multiples of $T$.

\noindent{\bf Proof of point (i).}
Consider the map
\begin{equation}
	f:\x=\smat{x_1\\x_2\\ \svdots \\x_{N-1}}\in\R^{N-1}\mapsto\frac{\lambda_2(T+G(\x))}{\lambda_N(T+G(\x))},
\end{equation}
where $G=G(\x)$ is defined as in \eqref{eq:linCombEk}. We want to show that $\nabla f=0$ at $\x=0$.
%\begin{proof}
%To this end, take $j$, $1\le j\le N-1$. 
Using \eqref{eq:Hada1}, we obtain
\begin{equation}
\pder{f}{x_j}\Big\rvert_{\x=0}=\frac{ \mu_N \v_2^TE_j \v_2-\mu_2\v_N^TE_j \v_N}{\mu_N^2}.
\end{equation}
Hence, we have
\begin{equation}\label{eq:iff_zeroGradient}
	\pder{f}{x_j}\Big\rvert_{\x=0}=0\quad\text{ if and only if }\quad\frac{\v_2^T E_j \v_2}{\v_N^T E_j \v_N}=\frac{\mu_2}{\mu_N}.
\end{equation}
Direct computation yields
\begin{equation}\label{eq:derRatio}
	\frac{\v_2^T E_j \v_2}{\v_N^T E_j \v_N}=
	\frac{\left(v_2(j)-v_2(j+1)\right)^2}
	{\left(v_N(j)-v_N(j+1)\right)^2}.
\end{equation}
%	In \cite[Theorem 5]{yueh2005eigenvalues} the author derives the following explicit formulae for the eigenvalues $\mu_k$ and eigenvectors $v_k$, $j=k,\ldots,n$, of $T$:
%	\begin{equation}\label{eq:explFormTrid}
%		\begin{array}{l}
%			\mu_k=2-2\cos\dfrac{(k-1)\pi}{n},\ k=1,\ldots,n,\medskip \\
%			%\end{equation*}
%			%\begin{equation*}
%			v_k(j)=\ell_k\cos\dfrac{(k-1)(2j-1)\pi}{2N},\ j=1,\ldots,n \text{ and } k=1,\ldots,n,
%		\end{array}
%	\end{equation}
%	where $\ell_k$ must be chosen so that $v_k$ has unit Euclidean length.  Substituting these expressions 
Substituting \eqref{DiffCoupSpec}
into the right-hand side of \eqref{eq:derRatio} and applying trigonometric identities, we obtain
	\begin{equation*}
		\begin{split}
&			\frac{\big(v_2(j)-v_2(j+1)\big)^2}{\big(v_N(j)-v_N(j+1)\big)^2} =
		%	\left(\dfrac{\ell_2}{\ell_N}\right)^2
			\frac{\left(\cos\dfrac{(2j-1)\pi}{2N}-\cos\dfrac{(2j+1)\pi}{2N}\right)^2}
			{\left(\cos\dfrac{(N-1)(2j-1)\pi}{2N}-\cos\dfrac{(N-1)(2j+1)\pi}{2N}\right)^2} = \\
			& = %\left(\dfrac{\ell_2}{\ell_N}\right)^2
			\frac{\left(\sin\dfrac{j\pi}{N}\sin\dfrac{\pi}{2N}\right)^2}{\left(\sin\dfrac{(N-1)j\pi}{N}\sin\dfrac{(N-1)\pi}{2N}\right)^2}
%			& 
=\frac{\left(\sin\dfrac{\pi}{2N}\right)^2}{\left(\sin\dfrac{(N-1)\pi}{2N}\right)^2}=\frac{\mu_2}{\mu_N}.
		\end{split}
	\end{equation*}
	This proves the statement on the right of \eqref{eq:iff_zeroGradient}, and hence concludes the proof of point (i).\smallskip
	%\end{proof}
	
\noindent{\bf Proof of point (ii).}
Recall that $A(t)$ is linear in $t$, and that $0=\lam_1(t)<\lam_2(t)<\ldots<\lam_N(t)$ for all $t\in(0,1]$. Then, it follows from \eqref{eq:Hada2} that $\lam_N(t)$ and $\lam_2(t)$ are, respectively, convex and concave functions of $t$ in $[0,1]$. Differentiating $r$, we obtain
\begin{equation}
	\dot r= \frac{\dot\lam_2\lam_N-\dot\lam_N\lam_2}{\lam_N^2}.
\end{equation}
Define $\varphi(t):=\dot\lam_2(t)\lam_N(t)-\dot\lam_N(t)\lam_2(t)$ for all $t\in[0,1]$. We have
	\begin{equation}
		\dot\varphi=\ddot\lam_2\lam_N-\ddot\lam_N\lam_2\le 0.
	\end{equation}
	Since $\dot r(1)=0$, we have $\varphi(1)=0$, and therefore $\varphi(t)\ge0$ for all $t\in[0,1]$. This proves $\dot r(t)\ge 0$ for all $t\in[0,1]$, i.e. $r$ is non-decreasing on $[0,1]$. 
	
	\smallskip\noindent{\bf Proof of point (iii).}
	We are going to show that $\ddot\lam_2(t)<0$ in $(1-\delta,1)$ for some $\delta>0$. This implies that $r$ is strictly decreasing in $(1-\delta,1)$, and hence $r(0)>r(1)$.
	
We know that $\ddot\lam_2(1)\le0$. Suppose $\ddot\lam_2(1)=0$.  Because of \eqref{eq:Hada2}, we have that $\v_j^T(T-S)\v_2=0$ for all $j\ne2$ (recall that $\v_1^T(S-T)$ is the zero vector). Then, we have that $(T-S)\v_2$ must be a scalar multiple of $\v_2$, possibly zero. In any case, this means that $\v_2$ is an eigenvector of $S$. A non-zero vector $\w=\smat{w_1\\ \svdots \\w_N}\in\R^N$ is an eigenvector of $S$ if and only if there exists $\alpha\in\R$ such that 
$(S-\alpha I)\w=0$. 
% where
%\begin{equation}
%	(S-\alpha I)=\bmat{
%		s_1-\alpha & -s_1 & & & & \\
%		-s_1 & s_1+s_2-\alpha & -s_2 & & & \\
%		& -s_2 & & \ddots & & \\
%		& & \ddots & \ddots& & \\
%		& & & & & -s_{N-1} \\
%		& & & & -s_{N-1} & s_{N-1}-\alpha \\
%	}.
%\end{equation}
Setting $s_j=-S_{j,j+1}$ for all $j=1,\ldots,N-1$, we recast this as a linear system in $s_1, \ldots,s_{N-1}, \alpha$ as follows:
\begin{equation}\label{eq:Wmat_def}
	\bmat{
		w_1-w_2 & & & & & -w_1 \\
		-w_1+w_2 & w_2-w_3& & & & -w_2 \\
		& -w_2+w_3 & \ddots & & & \vdots\\
		& & \ddots& & w_{N-1}-w_N & -w_{N-1}\\
		& & & & -w_{N-1}+w_N& -w_N\\ 
	}
	\bmat{s_1 \\ s_2 \\ \vdots \\ s_{N-1} \\ \alpha}=
	\bmat{0 \\ 0 \\ \vdots \\ 0 \\ 0}.
\end{equation}
Performing row-reduction on \eqref{eq:Wmat_def},
we obtain the equivalent system:
\begin{equation}\label{eq:Wmat_transformed}
	\bmat{
		w_1-w_2 & & & & -w_1 \\
		& w_2-w_3& & & -w_1-w_2 \\
		& &  \ddots & & \vdots\\
		& & & & -w_1-w_2-\ldots-w_{N}
	}
	\bmat{s_1 \\ s_2 \\ \vdots \\ s_{N-1} \\ \alpha}=
	\bmat{0 \\ 0 \\ \vdots \\ 0 \\ 0}.
\end{equation}
%and
%\begin{equation}
%\det(W)=\left(\sum_{j-1}^nw_j\right)\left(\prod_{j=1}^{n-1}(w_{j+1}-w_j)\right).
%\end{equation}
Let us denote the matrix in \eqref{eq:Wmat_transformed} by $W$ and set $\w = \v_2$. Then, since $w_j \ne w_{j+1}$ for all $j = 1, \ldots, N-1$ (see \eqref{DiffCoupSpec}), and $\v_1^T \v_2 = 0$, it follows that $\rank(W) = N-1$. Recall that a non-trivial solution to \eqref{eq:Wmat_transformed} is known and is given by $s_1=\ldots=s_{N-1}=1$ and $\alpha=\mu_2$. It follows that $S$ must be a scalar multiple of $T$, which is something we have excluded. Therefore, we must have $\ddot\lam_2(1)<0$.  Being $\ddot\lam_2$ continuous, then it must remain strictly negative in a (left) neighborhood of $1$.  This concludes the proof.
\end{proof}

\begin{coro}\label{cor:diffCoupl} Suppose we are given $0=\nu_1<\nu_2<\ldots<\nu_N$, with
	\begin{equation}
		\frac{\nu_2}{\nu_N}>\frac{\sin\left(\frac{\pi}{2N}\right)^2}{\sin\left(\frac{(n-1)\pi}{2N}\right)^2} \ .
	\end{equation}
Then, there is no matrix $S\in\cS$ such that $\lambda_j(S)=\nu_j$, $j=1,\ldots,N$.
\end{coro}

\begin{rem}\label{KeyRemark}
For our purposes, the key consequence of Corollary \ref{cor:diffCoupl} is the following.
Suppose that there is a finite interval of values $\eta$ for which the MSF is negative.  Then, because of \eqref{l2lN}, if
\begin{equation*}
	\frac{\min\{\eta: \text{the MSF is negative}\}}{\max\{\eta: \text{the MSF is negative}\}}>\frac{\sin\left(\frac{\pi}{2N}\right)^2}{\sin\left(\frac{(N-1)\pi}{2N}\right)^2},
\end{equation*}
it is not possible to obtain a negative value of the MSF with {\bf any} symmetric tridiagonal network Laplacian.
\end{rem}

%.... {\bf TO DO} Why this next Remark? {\bf END DO} ....
%\begin{rem} 
%Theorem \ref{thm:diffCoupl} provides a necessary condition for $0=\mu_1<\mu_2<\ldots<\mu_N$ to be the eigenvalues of a matrix $S\in\cS$. However, we note that this condition is not sufficient.
%\end{rem}

\section{Numerical Results}\label{NumRes}
Here we show how our technique works in practice. We give two examples, one is a
network of van der Pol oscillators with periodic synchronous orbit, the other is a 
network of R\"ossler oscillators with chaotic synchronous orbit.  Among our goals
is to show that, in general, the symmetric tridiagonal structure
\eqref{TridStandard} may fail to give stability of the synchronous orbit (that is, it won't give a  negative value of the MSF, no matter how large is $\sigma$ in \eqref{TridStandard}), but our technique is always able to give a negative value of the MSF, if there is an $\eta$-interval where the MSF is negative.

%; e.g., see Figures \ref{VdP2_fig} and \ref{VdP3_fig}.

For both our examples, we proceed as follows. 
\begin{itemize}
	\item[1.] We consider the parametrized linear system \eqref{ParamLinear}
%	\begin{equation}\label{linear_eq}
%		\dot \delta=(Df(\x(t))-\eta E) \delta, 
%	\end{equation}     
%with $\eta>0$ 
	and compute the MSF in function of $\eta$. 
	\item[2.] If there exist any, we select an 
	interval $[\eta_1, \eta_2]$ so that for 
	$\eta \in [\eta_1, \eta_2]$, the MSF is negative. 
	\item[3.] We use Algorithm {\tt TridZeroRowSum} to build the Laplacian $L$ as in 
	\eqref{FinalA} with nonzero eigenvalues in $[\eta_1, \eta_2]$, and null vector 
	 $\frac{1}{\sqrt{N}}\smat{1\\ \svdots \\ 1}$.
\end{itemize}

\subsection{Van der Pol}\label{vdP}
We consider a network of $N$ identical Van der Pol oscillators. The single agent satisfies the following equation
\begin{equation}\label{VdP_eq}
	\left \{ \begin{aligned}
		\dot y_1= & y_2 \\
		\dot y_2= &  -y_1+y_2(1-y_1^2)
	\end{aligned} \right . 
\end{equation}
and we choose the following coupling matrix 
$E=\begin{pmatrix} 0 & 0 \\ 1 & 0 \end{pmatrix}$. 
In Figure \ref{VdP_fig} on the left we plot the MSF for $\eta \in [0,\,\ 0.5]$. The MSF is negative for $\eta>0.39$ and remains negative. at least for $\eta  \leq 50$.    
  If we couple the agents via symmetric diffusive coupling as in Example \ref{DiffCouple}, then in order to synchronize the network we must impose $\sigma \lambda_2>0.39$ with $\sigma$ constant coupling strength. We then need large values of the coupling strength, namely $\sigma>40.5$ for $N=32$. %agents.$\sigma>43$ for $N=64$ agents and $\sigma>162$ for $N=128$. 
  We instead employ our technique and consider an asymmetric tridiagonal coupling. 
  
We have some freedom on what values we select for the $N-1$ 
eigenvalues of $L$.  We experimented with many different
choices for these values and below we report on two different experiments: 
i) select the eigenvalues linearly spaced in $[1, 10]$, and
ii) take the eigenvalues to be Chebyshev points of the first kind (rescaled
to $[1, 10]$ or to larger intervals, as needed). 
%For $32$ and $64$ agents we take eigenvalues in the interval $[1,10]$.
%In both cases, 
For $N=32$, all the elements of the coupling matrix are less than $6.2$. We take an initial condition on the attractor and perturb it with a normally distributed perturbation vector. Then we integrate the network with a 4-th order Runge Kutta method and fixed stepsize $h$. For these methods, theoretical results guarantee that for $h$ sufficiently small the numerical method has a closed invariant curve and that the distance of this curve from the synchronous periodic orbit is $O(h^4)$ (see \cite{Eirola}).  \\

We synchronize $32$ agents with both choices of eigenvalues. To witness, in Figure \ref{VdP_fig} on the right we plot $\max_{i=1, \ldots, 31} \|\x_i(t)-\x_{i+1}(t)\|_2$ at the grid points. The greatest value of the MSF is obtained for $\eta=1$, and it is $\simeq -0.13$. We plot $e^{-0.13 t}$ as well in order to appreciate the convergence speed to the synchronous solution. The plots are obtained for one initial condition, but the behavior we observe is consistent for every normally distributed random perturbation we considered. The transient in this case is relatively short. \\

\begin{figure}
	\begin{minipage}[c]{0.49\linewidth}
		\centering
		\includegraphics[width=\linewidth]{./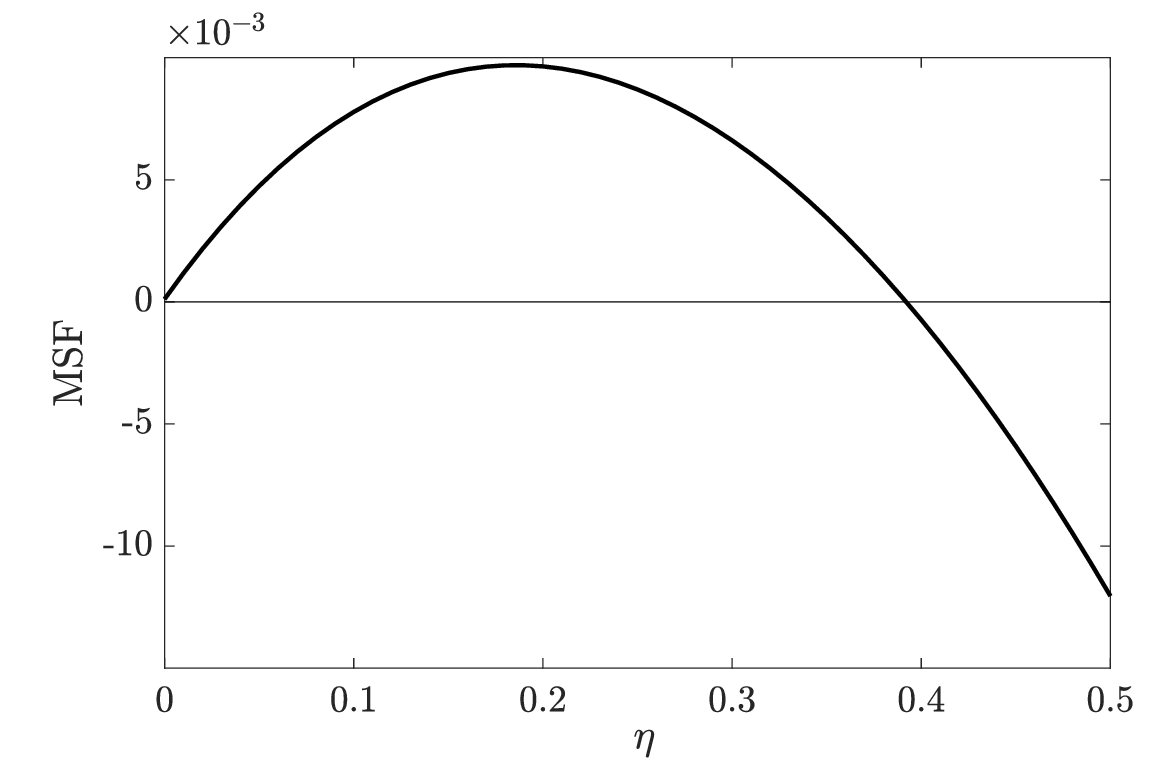}
	\end{minipage}
	\begin{minipage}[c]{0.49\linewidth}
		\centering
		\includegraphics[width=\linewidth]{./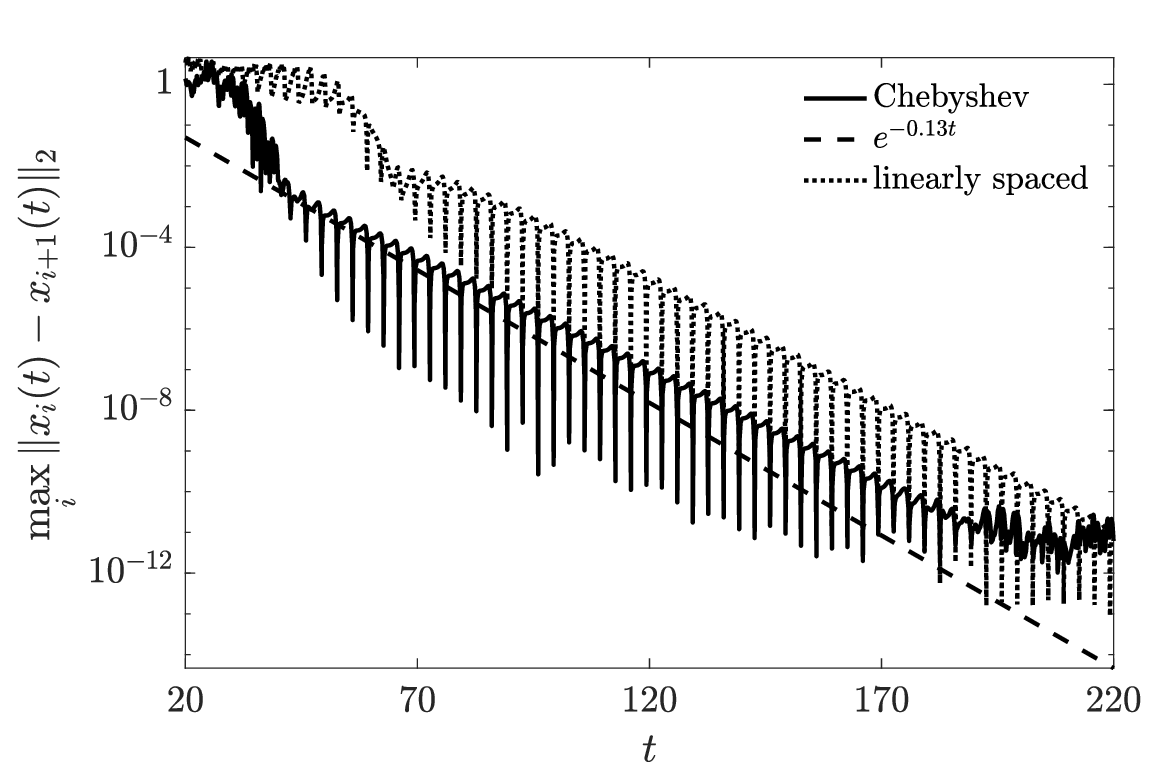}		
	\end{minipage}
%\vskip-70pt
\caption{Van der Pol, N=32.  Left: MSF. Right: 2-norm of the difference between agents.}
\label{VdP_fig} 
\end{figure}

\subsection{R\"ossler}\label{Rossler}
Next, we consider a network of $N$ identical R\"ossler oscillators. Each agent
satisfies the system
\begin{equation}\label{Ross_eq}
	\left \{ \begin{aligned}
		\dot y_1= & -y_2-y_3 \\
		\dot y_2= &  y_1+0.2y_2 \\
		\dot y_3= & 0.2+(y_1-9)y_3 
	\end{aligned} \right .
\end{equation}
and we choose the coupling matrix $E \in \R^{3 \times 3}$ in \eqref{ManyAgents2}
given by $E=\begin{pmatrix} 1 & 0 & 0 \\ 0 & 0 & 0 \\ 0 & 0 & 0 \end{pmatrix}$.
In Figure \ref{Rossler1_fig}, on the left, we plot the MSF in function of $\eta$. 
The MSF is negative in the interval $\eta\in [0.19 \,\ 4.61]$. 

If we use diffusive coupling with a 
constant coupling strength $\sigma$, like in Example \ref{DiffCouple},
then (see the explicit values of the eigenvalues given in Example \ref{DiffCouple}),
in order to synchronize $N$ agents, we 
need $\sigma \lambda_2>0.19$ and $\sigma \lambda_N<4.61$.
%, where with 
%$0<\lambda_2< \ldots < \lambda_N$, we denoted the eigenvalues of the Laplacian 
%matrix. 
A necessary condition for $N$ agents to synchronize is then, see \eqref{l2lN},  $\frac{\lambda_N}{\lambda_2}<\frac{4.61}{0.19}$ and as soon as 
$N>7$ this condition is not satisfied and hence we cannot hope to 
synchronize more than $7$ agents with diffusive coupling and constant coupling 
strength.
In \cite{PecoraCarroll} the authors use diffusive coupling in a circular array and can 
synchronize up to $10$ agents, but already with $16$ agents the necessary conditions 
for synchronization are not met anymore.\footnote{The authors of 
\cite{PecoraCarroll} point our that, with symmetric tridiagonal Laplacians there 
will always be an upper limit in the size of a the network in order to obtain a 
stable synchronous chaotic orbit. 
With our technique, in principle there is not such limitation.} 

In what follows we couple and synchronize a network of 64 R\"ossler agents with
our technique, about the chaotic orbit of a single R\"ossler oscillator.
Of course, as we observed in our numerical experiments, the lack of symmetry causes a large transient and the basin of attraction of the synchronous periodic orbit is in general affected by it. % see \cite{MCGA} for more details on this. 

We report on experiments with two different sets of eigenvalues: 
linearly spaced in $[0.5, 3]$, and Chebyshev points of the first kind (rescaled
to $[0.5, 3]$). 
%For $\eta \in [0.5, 3]$, the greatest Lyapunov exponent of \eqref{linear_eq} is 
%negative and hence synchronization of \eqref{Network_eq} is guaranteed. 
We consider initial conditions obtained by adding a normally distributed perturbation 
with variance $1$ of a synchronous initial condition and integrate \eqref{ManyAgents2}
to verify that indeed we obtain synchronization.
In Figure \ref{Rossler1_fig} we plot 
$\max_{i=1, \ldots, 63} \|\x_i(t)-\x_{i+1}(t)\|_2$ for the two choices of eigenvalues.
For $\eta=0.5$, the corresponding value of the MSF is $\simeq -0.15$ and the dashed 
line in the right plot is the graph of $e^{-0.15t}$. It is clear that, after a transient, the 
convergence speed of the two perturbed solutions is also $e^{-0.15t}$.  
%For $N=128$ oscillators, after a longer transient, the quantity $\max_{i=1, \ldots, 63} \|x_i(t)-x_{i+1}(t)\|_2$ 
%is not monotone and oscillates between $O(10^{-3})$ and $O(10^{-12})$.

\begin{figure}
	\begin{minipage}[c]{0.49\linewidth}
		\centering
		\includegraphics[width=8cm]{./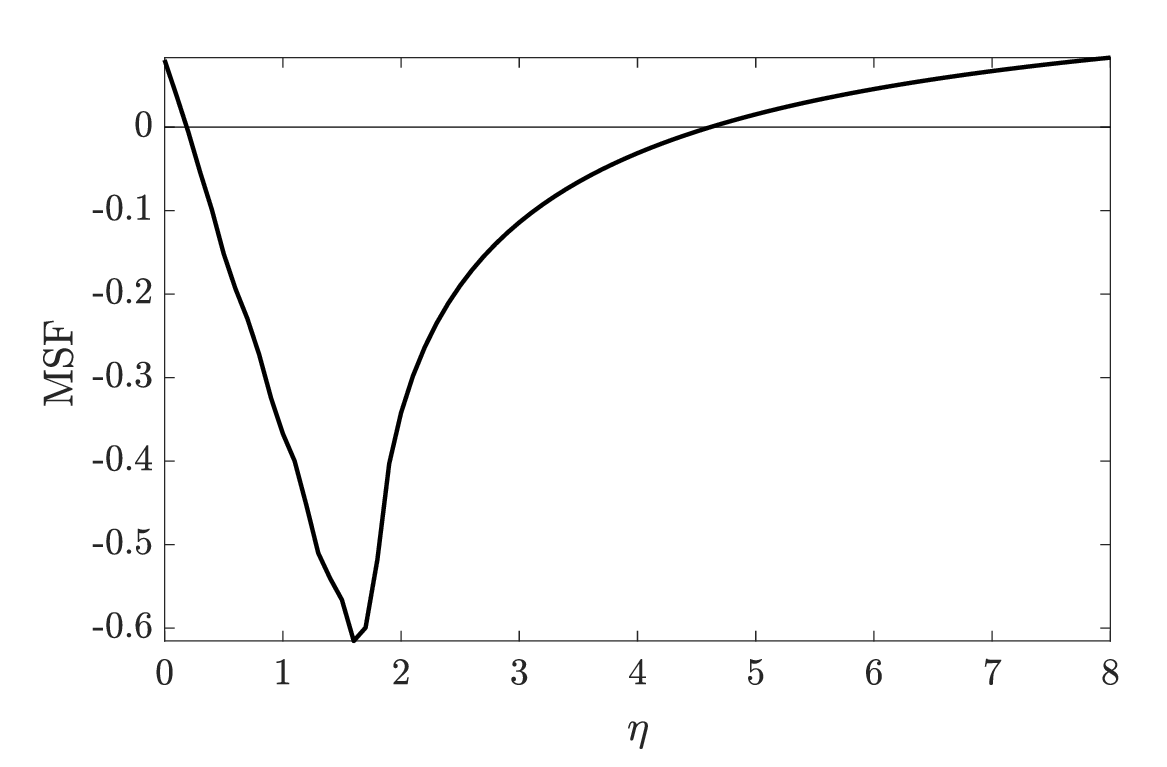}
	\end{minipage}
	\begin{minipage}[c]{0.49\linewidth}
		\centering
		\includegraphics[width=8cm]{./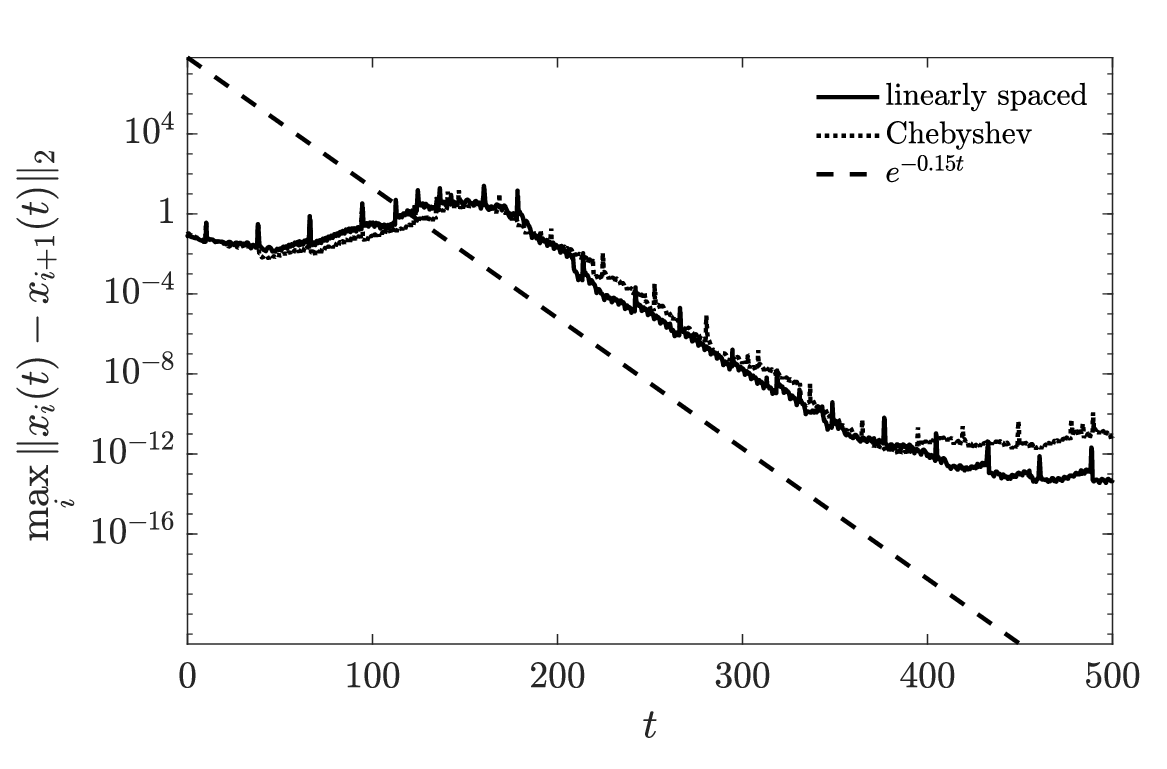}
	\end{minipage}
%	\vskip-90pt
\caption{Left: MSF for R\"ossler. Right: 2-Norm of the difference between 
	agents.}\label{Rossler1_fig} 
\end{figure}

\section{Conclusions}\label{Concl}
In this work, we gave two contributions.  First, we built a tridiagonal matrix $L$ with eigenvalues $\lambda_1=0<\lambda_2<\cdots <\lambda_N$ and null-vector $\e=\smat{1\\ \svdots \\ 1}$.  Then, we used this result to achieve --if possible-- that a synchronous orbit of a tridiagonal network associated to the matrix $L$ above is asymptotically stable, in the sense of having an associated negative Master Stability Function (MSF).  Our matrix $L$, in general, is not symmetric, and we further gave some results highlighting the limitations occurring if we also require symmetry.
From the practical point of view, the main consequence of our results is that, if at all possible to have a negative MSF with tridiagonal network structure for the Laplacian, then  we can have a stable synchronous motion, and we gave numerical examples to show this.
At the same time, we also observed that, when we increase the number of agents, it is not always  possible to achieve synchronization with arbitrary precision.  We leave investigation of this fact to a future numerical study.

\end{document}